\documentclass[11pt,a4paper,reqno]{amsart}
\usepackage{mystyle}

\author[Hladk\'y]{Jan Hladk\'y}
\address[Hladk\'y]{Institute of Computer Science of the Czech Academy of Sciences, Pod Vod\'arenskou v\v{e}\v{z}\'i 2, 182 07 Prague, Czechia.}
\email{hladky@cs.cas.cz}
\thanks{(Hladk\'y): Research supported by Czech Science Foundation Project 21-21762X}

\author[I\v{l}kovi\v{c}]{Daniel I\v{l}kovi\v{c}}
\address[I\v{l}kovi\v{c}, Shu]{Institute of Mathematics, Leipzig University, Augustusplatz 10, 04109 Leipzig.\newline \hspace*{3mm} Previous affiliation: Faculty of Informatics, Masaryk University, Botanick\'a 68A, 602 00 Brno, Czech Republic.}
\email{daniel.ilkovic@gmail.com}
\thanks{(I\v{l}kovi\v{c}, Shu): Research supported by the Alexander von Humboldt Foundation through the
Alexander von Humboldt Professorship of Daniel Kr{\'a}l', endowed by the
Federal Ministry of Education and Research, and by the Grant Agency of
Masaryk University through the MUNI Award in Science and Humanities
(MUNI/I/1677/2018).}

\author[Le\'on]{Jared Le\'on}
\address[Le\'on]{Mathematics Institute, University of Warwick,
  Coventry CV4 7AL, United Kingdom}
\email{jared.leon@warwick.ac.uk}
\thanks{(Le\'on): Research supported by the Warwick Mathematics Institute Centre for Doctoral Training.}

\author[Shu]{Xichao Shu}
\email{xichao.shu@uni-leipzig.de}

\begin{document}

\title{Arithmetic progressions in a random set on a budget}

\begin{abstract}
%ORIGINAL ABSTRACT:
%We study the problem of constructing $k$-term arithmetic progressions in a random subset of integers under a restricted budget. A player, Builder, is presented with a random sequence of $t$ integers drawn from $[n]$ and must immediately and irrevocably decide whether to select each integer, subject to a maximum budget $b$. We establish the optimal thresholds for this process, proving that for $t = \omega(n^{1-2/k})$, a budget of $b = \Theta((n/t)^{\frac{k-2}{2}})$ is both necessary and sufficient for Builder to successfully construct a $k$-term arithmetic progression with high probability.
%---------------------------------
%HONZA VERSION 1, just made clearer that things are done in a step-by-step fashion
%We study the problem of constructing $k$-term arithmetic progressions in a random subset of integers $\{1,\ldots,n\}$ under a restricted budget. A player, Builder, is presented with a sequence of $t$ integers drawn uniformly at random from $\{1,\ldots,n\}$. As the elements are revealed one by one, Builder must immediately and irrevocably decide whether to select the current integer, subject to a maximum budget of $b$ selected elements in total. We establish the optimal thresholds for this process, proving that for $t = \omega(n^{1-2/k})$, a budget of $b = \Theta((n/t)^{\frac{k-2}{2}})$ is both necessary and sufficient for Builder to successfully construct a $k$-term arithmetic progression with high probability.
%---------------------------------
%VERSION 2, HONZA'SPREFERED: connects back to the graph framework
A restricted-budget version of the random graph process, introduced by Frieze, Krivelevich, and Michaeli in 2025, studies the construction of structures by an online player who can purchase only a limited number of random edges. In this paper, we transfer this framework from random graphs to random subsets of integers, focusing on the construction of $k$-term arithmetic progressions. A player, Builder, is presented with a sequence of $t$ integers drawn uniformly at random from $[n]$. As the elements are revealed one by one, Builder must immediately and irrevocably decide whether to select the current integer, subject to a maximum budget of $b$ selected elements in total. We establish the optimal thresholds for this process, proving that for $t = \omega(n^{1-2/k})$, a budget of $b = \Theta((n/t)^{\frac{k-2}{2}})$ is both necessary and sufficient for Builder to successfully construct a $k$-term arithmetic progression with high probability.
\end{abstract}

\maketitle

\section{Introduction}
Much of the theory of random discrete structures concerns the emergence
of prescribed substructures. In this paper, we focus on two random
models: Erd\H{o}s--R\'enyi random graphs and random subsets of $[n]$.
The former provides the main source of inspiration, whereas the latter
is the object of study.

The uniform random graph $G(n,m)$, where $0\leq m\leq \binom{n}{2}$,
may be generated by ordering the edges of $K_n$ uniformly at random
and retaining the first $m$ edges.  Equivalently, $G(n,m)$ is the
graph at time $m$ in the standard random graph process
\cite{MR0125031}. Classical results show that connectivity---or,
equivalently, the appearance of a spanning tree---and, when $n$ is
even, the appearance of a perfect matching both have their critical
windows centred at $\frac{1}{2}n\log n$
\cite{MR0125031,ErdosRenyi1966,BollobasThomason1985}.  Hamiltonicity
occurs slightly later, with its critical window centred at
$\frac{1}{2}n(\log n+\log\log n)$
\cite{Bollobas1984,AjtaiKomlosSzemeredi1985}.  Thus, all three
properties have the same first-order threshold
\begin{equation*}
  m=\left(\frac{1}{2}+o(1)\right)n\log n.
\end{equation*}
For general background on random graphs, we refer the reader to
\cite{Bollobas2001,MR1782847,FriezeKaronski:IntroRandomGraphs}.

Frieze, Krivelevich, and Michaeli~\cite{FriezeKrivelevichMichaeli} enhanced the standard random graph process with the notion of a restricted budget. In their framework, the edges of $K_n$ are presented one at a time in a uniformly random order. A player, known as Builder, must make an immediate and irrevocable decision whether to purchase each presented edge. The core challenge is that Builder operates under two strict limits: a time constraint on the number of edges they can inspect, and a budget constraint on the number they can purchase. The ultimate goal is to construct a prescribed structure, such as a spanning tree, a perfect matching, or a Hamilton cycle.

Several subsequent works have investigated how small the budget can be while retaining the usual random-graph threshold for the duration of the process. For every fixed $d\in\mathbb N$, Lichev~\cite{Lichev} gave an asymptotically optimal strategy for constructing a $d$-connected graph.
Anastos~\cite{Anastos} obtained asymptotically optimal results for perfect matchings and Hamilton cycles. Further hitting-time and minimum-degree variants were studied by Frieze, Krivelevich, and Michaeli~\cite{FriezeKrivelevichMichaeli} and by Katsamaktsis and Letzter~\cite{KatsamaktsisLetzter}. These results show that, for several classical random-graph properties, Builder can retain the desired structure while purchasing only a small proportion of the edges that have been exposed. 
%HONZA: Work of Espuny Dias et al has been added.
Espuny Díaz, Garbe, Naia, and
Smith~\cite{diaz2026graphfactorspowershamilton} studied partial
$F$-factors and powers of Hamilton cycles, showing that for these
structures, the required budget cannot always be asymptotically
reduced.

The problem of constructing small subgraphs has also been
considered. Frieze, Krivelevich, and Michaeli found the optimal budget
required to construct any fixed tree or cycle, as well as any
unicyclic graph \cite{FriezeKrivelevichMichaeli}. The optimal budget
of several other multicyclic graphs, such as \(K_4^-\), \(K_4\), fans and
wheels were also determined
\cite{IlkovicLeonShu24,AntoniukDiazPetrovaStojakovic26}. For general
graphs, however---and in particular, cliques---the problem remains
wide open.

In this short paper, we transfer the framework of processes with restricted budget from graphs to integers. The particular structures that we study are arithmetic progressions of a fixed length. A non-trivial \emph{$k$-term arithmetic progression}, abbreviated to a \emph{$k$-AP}, is a set of the form $\{a,a+d,\ldots,a+(k-1)d\}$, where $d\geq 1$. Arithmetic progressions are among the central objects of additive combinatorics. Roth proved that every subset of the positive
integers with positive upper density contains a non-trivial $3$-term arithmetic progression~\cite{Roth1953}, and Szemer\'edi extended this to arithmetic progressions of every fixed length~\cite{Szemeredi1975}.

%Sparse random analogues of these density theorems ask whether every subset of positive relative density inside a random subset of $[n]$ contains a prescribed arithmetic progression. The case of $3$-term arithmetic progressions was established by Kohayakawa, \L{}uczak, and R\"odl~\cite{KohayakawaLuczakRodl1996}. The corresponding result for arithmetic progressions of every fixed length was later obtained independently by Conlon and Gowers~\cite{ConlonGowers2016} and by Schacht~\cite{Schacht2016}.
%HONZA: I MADE SUBSTANTIAL CHANGES HERE.
Sparse random analogues of these density theorems ask whether every subset of positive relative density inside a random subset of $[n]$ contains a $k$-AP. It is a standard observation that this property requires a density of at least $p = \Omega(n^{-1/(k-1)})$. The optimal result for $3$-APs at this density was established by Kohayakawa, \L{}uczak, and R\"odl~\cite{KohayakawaLuczakRodl1996}, and the corresponding result for every fixed length was later obtained independently by Conlon and Gowers~\cite{ConlonGowers2016} and by Schacht~\cite{Schacht2016}.

Our problem is different from these random versions of Szemer\'edi's theorem. We do not ask whether every sufficiently large subset of a random set contains an arithmetic progression. Instead, we consider an online selection problem: the elements of $[n]$ are presented one by one in a uniformly random order, but the process terminates after only $t$ steps. Builder must select a small number of them, without knowledge of the future, so that the selected set contains a single $k$-AP before the sequence ends.

\begin{definition}
\label{def:strategy}
Let $0\leq b\leq t\leq n$. Let $X_1,\ldots,X_n$ be a uniformly random permutation of $[n]$. A \emph{$(t,b)$-strategy} is a possibly randomised online algorithm for a player called Builder.
During round $s\in[t]$, Builder is shown $X_s$ and must decide immediately and irrevocably whether to purchase it. This decision may depend only on the integers $X_1,\ldots,X_s$, the previous purchase decisions, and the internal randomness of the strategy. Builder may purchase at most $b$ integers in total.
\end{definition}

A $(t,b)$-strategy \emph{succeeds} if the set of purchased integers
contains a $k$-AP. Throughout the
paper, $k\geq 3$ is fixed and $n\to\infty$. As usual, an event holds
\emph{with high probability}, abbreviated as \whp, if its
probability tends to $1$ as $n\to\infty$.

Our main result determines, in the usual $o/\omega$ sense, the joint
threshold for the duration of the process and the purchasing budget.

\begin{theorem} \label{thm:main}
Let $k\ge3$ be fixed. Let $n\ge1$, and let $t, b\le n$ be functions of
$n$. Then, if $t=\omega(n^{1-2/k})$ and
$b=\omega((n/t)^{\frac{k-2}{2}})$ there is a $(t, b)$-strategy that \whp constructs a $k$-AP. Conversely, if $t=o(n^{1-2/k})$ or $b=o((n/t)^{\frac{k-2}{2}})$ then any $(t, b)$-strategy \whp fails to construct a $k$-AP.
\end{theorem}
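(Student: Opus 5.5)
The proof has two directions, and the lower bound on the budget turns out to be the more delicate one.

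\emph{Upper bound.} The strategy will be: buy every revealed integer lying in a fixed initial interval $[L]$. I would take $L=\min\{n,\ \tfrac12 bn/t\}$. The number of purchases is hypergeometric with mean $Lt/n\le b/2$, so by Chernoff it is at most $b$ \whp. If Builder would ever exceed $b$, he simply stops buying; this costs only $o(1)$ in probability.

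Builder succeeds as soon as the random set $\{X_1,\dots,X_t\}\cap[L]$ contains a $k$-AP. Let $Z$ count these $k$-APs. There are $\Theta(L^2)$ $k$-APs in $[L]$, so
\[
\mathbb E Z=\Theta\bigl(L^2(t/n)^k\bigr).
\]
In the case $L=\tfrac12 bn/t$ this is $\Theta(b^2(t/n)^{k-2})\to\infty$. In the case $L=n$ it is $\Theta(n^2(t/n)^k)=\Theta(t^k/n^{k-2})\to\infty$, using $t=\omega(n^{1-2/k})$.

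To get $Z>0$ \whp I would apply the second moment method. For the hypergeometric model I can either work with it directly or Poissonise. Two $k$-APs sharing exactly one element contribute $O(L^3(t/n)^{2k-1})$ to the variance, which is $o((\mathbb E Z)^2)$ because $L\,t/n\to\infty$. Two $k$-APs sharing $i\ge2$ elements are determined up to $O_k(1)$ choices by one of them, so they contribute $O(\mathbb E Z\cdot (t/n)^{k-i})=O(\mathbb E Z)=o((\mathbb E Z)^2)$. Chebyshev then finishes this direction.

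\emph{Lower bound on $t$.} Here I would use the first moment. The revealed set contains, in expectation, $O(n^2(t/n)^k)=o(1)$ $k$-APs when $t=o(n^{1-2/k})$. Hence \whp Builder cannot succeed, regardless of budget.

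\emph{Lower bound on $b$.} This is the main obstacle, since Builder chooses adaptively which elements to buy. I would track partial configurations. For $I\subseteq[k]$ with $|I|\ge2$, an $I$-configuration at time $s$ is an injective map from $I$ into the purchased set that is the restriction of some $k$-AP $(a+(i-1)d)_{i\in[k]}$ with $d\ge1$. Let $N_I(s)$ be their number, and set $N_j=\sum_{|I|=j}N_I(t)$. Two points and their positions determine the AP, so deterministically $N_2\le k^2b^2$.

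A new $I$-configuration is created at round $s$ only if $X_s$ equals the point at some position $i\in I$ forced by an existing $(I\setminus\{i\})$-configuration at time $s-1$. For each fixed existing configuration, the probability of this is at most $1/(n-s+1)$. This bound holds whatever Builder decides, and we may assume $t\le n/2$, since otherwise the hypothesis forces $b=o(1)$, i.e.\ $b=0$. Summing over rounds and using that $N_{j-1}(\cdot)$ is nondecreasing gives
\[
\mathbb E N_j\le \frac{2kt}{n}\,\mathbb E N_{j-1}.
\]
Iterating from $j=3$ to $j=k$ yields
\[
\mathbb E N_k=O\bigl(b^2(t/n)^{k-2}\bigr)=o(1).
\]
Markov's inequality then shows that \whp no $k$-AP is purchased.

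The only point needing care is to make the per-round conditional bound rigorous in the presence of Builder's adaptivity. I would handle this by conditioning on the history $\mathcal F_{s-1}$ and noting that $X_s$ is uniform on the unrevealed elements, independently of Builder's internal randomness.
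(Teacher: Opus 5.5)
Your proposal is correct and follows the same plan as the paper: buy every exposed element of an initial interval of length about $bn/(2t)$, use a first-moment count when $t=o(n^{1-2/k})$, and for the budget bound iterate $\mathbb E N_j\le\frac{2kt}{n}\,\mathbb E N_{j-1}$ on partial progressions, starting from the deterministic $O(b^2)$ bound for pairs. The differences are only technical: you run the second moment directly in the uniform $t$-subset model rather than coupling to $[n]_p$ and invoking Proposition~\ref{prop:threshold}, and you bound $\mathbb P(X_s=x\mid\mathcal F_{s-1})\le 1/(n-s+1)\le 2/n$ directly in the without-replacement model (after disposing of $t>n/2$, where $b=0$), which is slightly cleaner than the paper's detour through i.i.d.\ samples with $T=2t$ and McDiarmid's inequality.
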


%HONZA: MORE DETAILS ADDED
The two conditions in Theorem~\ref{thm:main} correspond to two natural obstructions. The first is the usual threshold $t\asymp n^{1-2/k}$ for the appearance of a single $k$-AP among the exposed integers.\footnote{We recall this result in Proposition~\ref{prop:threshold}.} The second is the budget obstruction: the $b$ purchased elements can form at most $O(b^2)$ pairs, and for any such pair to be extended into a $k$-AP, the remaining $k-2$ elements of the progression must appear among the $t$ exposed integers. This occurs with probability roughly $(t/n)^{k-2}$, meaning the expected number of constructed progressions is bounded by $O(b^2(t/n)^{k-2})$. Requiring this expectation to be non-vanishing yields the critical scale $b\asymp(n/t)^{\frac{k-2}{2}}$.

The positive part is proved by purchasing exposed integers from a
suitable initial interval of $[n]$. The converse follows from a
first-moment argument and a count of partial arithmetic-progression
templates.

\section{Notation and Preliminaries}

For a positive integer $n$ and $p\in[0,1]$, we write $[n]_p$ for the
binomial random subset of $[n]$ obtained by including every element
independently with probability $p$. For an integer $0\leq m\leq n$, we
write $[n]_m$ for a uniformly random $m$-element subset of $[n]$.

A property (i.e., a family) $\mathcal P$ of subsets of $[n]$ is called \emph{increasing} if for each $A\in\mathcal P$ and $A\subseteq B\subseteq[n]$ we have $B\in\mathcal P$. Whenever an asymptotic expression is used to define an integer parameter, we round it to an integer in an arbitrary way. This does not affect any of our arguments.

We shall use the following standard concentration inequalities.
\begin{lemma}[Chebyshev's inequality]
\label{lem:chebyshev}
Let $X$ be a random variable with finite variance. Then, for every
$a>0$,
\[
    \mathbb P\bigl(|X-\mathbb EX|\geq a\bigr)
    \leq
    \frac{\operatorname{Var}(X)}{a^2}.
\]
\end{lemma}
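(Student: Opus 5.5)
We deduce Chebyshev's inequality from Markov's inequality applied to a suitable nonnegative random variable. Markov's inequality states that for a nonnegative random variable $Y$ and every $c>0$ we have $\mathbb P(Y\geq c)\leq \mathbb E Y/c$. To keep the argument self-contained, we first recall why it holds. The pointwise inequality $c\cdot\mathbf 1_{\{Y\geq c\}}\leq Y$ is valid everywhere on the probability space, since the left-hand side is $0$ off the event and at most $Y$ on it. Taking expectations gives the claim.

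Next, set $Y=(X-\mathbb EX)^2$. This is well defined because $X$ has finite variance and hence finite mean; it is nonnegative and satisfies $\mathbb E Y=\operatorname{Var}(X)$. For $a>0$ the events $\{|X-\mathbb EX|\geq a\}$ and $\{Y\geq a^2\}$ coincide, because $u\mapsto u^2$ is strictly increasing on $[0,\infty)$. Applying Markov's inequality with $c=a^2>0$ then yields
\[
\mathbb P\bigl(|X-\mathbb EX|\geq a\bigr)=\mathbb P\bigl(Y\geq a^2\bigr)\leq \frac{\mathbb E Y}{a^2}=\frac{\operatorname{Var}(X)}{a^2},
\]
as required.

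There is no real obstacle here. The only points needing care are measure-theoretic: the finite-variance hypothesis guarantees that $\mathbb EX$ exists and that $Y$ is integrable, and monotonicity of expectation justifies taking expectations in the pointwise inequality.
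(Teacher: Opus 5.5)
Your proof is correct: applying Markov's inequality to the nonnegative variable $(X-\mathbb EX)^2$ at threshold $a^2$ is the standard derivation. The paper does not prove this lemma; it quotes it as a standard concentration inequality, so there is no paper proof to compare against.
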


\begin{lemma}[Chernoff's inequality]
\label{lem:chernoff}
Let $X$ be a binomial random variable with mean $\mu$. Then, for every
$0<\varepsilon\leq1$,
\[
    \mathbb P\bigl(|X-\mu|\geq\varepsilon\mu\bigr)
    \leq
    2\exp\left(-\frac{\varepsilon^2\mu}{3}\right).
\]
\end{lemma}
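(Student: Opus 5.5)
This is the standard multiplicative Chernoff bound, and I would prove it by the exponential moment method, treating the upper and lower tails separately and then adding the two bounds. Write $X=\sum_{i=1}^N \xi_i$ with $\xi_i$ independent Bernoulli$(q)$, so that $\mu=Nq$. The first step is the moment generating function. By independence and $1+x\le e^x$,
\[
  \mathbb E e^{\lambda X}=\bigl(1+q(e^\lambda-1)\bigr)^N\le \exp\bigl(\mu(e^\lambda-1)\bigr)
\]
for every real $\lambda$.

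For the upper tail, take $\lambda>0$ and apply Markov's inequality to $e^{\lambda X}$. This gives
\[
  \mathbb P\bigl(X\ge(1+\varepsilon)\mu\bigr)\le\exp\bigl(\mu(e^\lambda-1)-\lambda(1+\varepsilon)\mu\bigr).
\]
Choosing $\lambda=\ln(1+\varepsilon)$ yields the bound $\exp\bigl(-\mu\,\phi(\varepsilon)\bigr)$, where $\phi(\varepsilon)=(1+\varepsilon)\ln(1+\varepsilon)-\varepsilon$.

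For the lower tail, take $\lambda=-\ln(1-\varepsilon)>0$ and apply Markov to $e^{-\lambda X}$. This gives
\[
  \mathbb P\bigl(X\le(1-\varepsilon)\mu\bigr)\le\exp\bigl(-\mu\,\psi(\varepsilon)\bigr),\qquad \psi(\varepsilon)=(1-\varepsilon)\ln(1-\varepsilon)+\varepsilon .
\]
At $\varepsilon=1$ I would read $\psi(1)=1$ by continuity. Alternatively, the case $\varepsilon=1$ can be handled directly, since $\mathbb P(X=0)=(1-q)^N\le e^{-\mu}$.

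The only real work is the pair of elementary inequalities $\phi(\varepsilon)\ge\varepsilon^2/3$ and $\psi(\varepsilon)\ge\varepsilon^2/2$ for $0<\varepsilon\le1$. The second is easy. We have $\psi(0)=\psi'(0)=0$ and $\psi''(\varepsilon)=1/(1-\varepsilon)\ge1$, so integrating twice gives the claim. For the first, set $g(\varepsilon)=\phi(\varepsilon)-\varepsilon^2/3$. Then $g(0)=0$ and $g'(\varepsilon)=\ln(1+\varepsilon)-2\varepsilon/3$, with $g'(0)=0$. Also $g''(\varepsilon)=1/(1+\varepsilon)-2/3$, which is positive on $[0,1/2)$ and negative on $(1/2,1]$. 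Hence $g'$ first increases from $0$ and then decreases to $g'(1)=\ln2-2/3>0$. So $g'\ge0$ on $[0,1]$, and therefore $g\ge0$ there. This case analysis on the sign of $g''$ is the step needing the most care, because the constant $1/3$ only just works near $\varepsilon=1$.

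Finally, each tail is at most $\exp(-\varepsilon^2\mu/3)$, and a union bound over the two tails gives the factor $2$ in the statement of Lemma~\ref{lem:chernoff}.
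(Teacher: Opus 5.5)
Your proof is correct. It is the standard exponential-moment argument, and the paper gives no proof of this lemma at all, quoting it only as a standard concentration inequality. Your calculus checks are all sound: $\phi(\varepsilon)\ge\varepsilon^2/3$ via the sign change of $g''$ at $1/2$ together with $g'(1)=\ln 2-2/3>0$, then $\psi(\varepsilon)\ge\varepsilon^2/2$, and the separate handling of $\varepsilon=1$ in the lower tail.
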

%HONZA: Previously, McDiarmid was used but not spelled out. Now added the statement.
\begin{lemma}[McDiarmid's inequality]
\label{lem:mcdiarmid}
Let $X_1, \dots, X_T$ be independent random variables taking values in a set $\mathcal{X}$. Let $f \colon \mathcal{X}^T \to \mathbb{R}$ be a function satisfying the bounded differences condition
\[
|f(x) - f(x')| \le 1
\]
whenever $x, x' \in \mathcal{X}^T$ differ in exactly~1 coordinate. Then, for every $\varepsilon > 0$,
\[
\mathbb{P}\bigl(f(X_1, \dots, X_T) - \mathbb{E}[f(X_1, \dots, X_T)] \le -\varepsilon\bigr) \le \exp\left(-\frac{2\varepsilon^2}{T}\right).
\]
\end{lemma}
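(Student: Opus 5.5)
This is the classical bounded-differences inequality. I would prove it by the Doob martingale method combined with Hoeffding's lemma, in the standard Azuma--Hoeffding fashion. Write $f=f(X_1,\dots,X_T)$ and let $\mathcal F_i$ be the $\sigma$-algebra generated by $X_1,\dots,X_i$. Define
\[
Z_i=\mathbb E[f\mid \mathcal F_i],\qquad i=0,1,\dots,T,
\]
so that $Z_0=\mathbb E f$ and $Z_T=f$. Then $Z_T-Z_0=\sum_{i=1}^T D_i$ with $D_i=Z_i-Z_{i-1}$. Each $D_i$ is a martingale difference: $\mathbb E[D_i\mid\mathcal F_{i-1}]=0$.

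The key step, and the one I expect to need the most care, is the following claim.

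\emph{Claim.} Conditionally on $\mathcal F_{i-1}$, the increment $D_i$ takes values in an interval $[L_i,U_i]$ of length at most $1$, where $L_i$ and $U_i$ are $\mathcal F_{i-1}$-measurable.

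Independence is essential here. Because the $X_j$ are independent, for fixed values $x_1,\dots,x_{i}$ we have
\[
Z_i=g_i(x_1,\dots,x_i)=\int f(x_1,\dots,x_i,y_{i+1},\dots,y_T)\,d\mu_{i+1}(y_{i+1})\cdots d\mu_T(y_T),
\]
an integral against the product of the laws of the remaining coordinates. This product measure does not depend on $x_i$. Hence for any two values $x_i,x_i'$,
\[
|g_i(\dots,x_i)-g_i(\dots,x_i')|\le 1
\]
by the bounded differences hypothesis. We may therefore take
\[
L_i=\inf_x g_i(x_1,\dots,x_{i-1},x)-Z_{i-1},\qquad U_i=\sup_x g_i(x_1,\dots,x_{i-1},x)-Z_{i-1}.
\]
I would gloss over measurability subtleties for a general space $\mathcal X$; in our application $\mathcal X$ is finite.

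Next I apply Hoeffding's lemma conditionally. A mean-zero random variable supported in an interval of length at most $1$ satisfies, for every $\lambda>0$,
\[
\mathbb E\bigl[e^{-\lambda D_i}\mid\mathcal F_{i-1}\bigr]\le e^{\lambda^2/8}.
\]
This is proved by convexity of the exponential and a second-order Taylor bound on the log-moment generating function. Peeling off one conditional expectation at a time, using the tower property from $i=T$ down to $i=1$, gives
\[
\mathbb E\,e^{-\lambda(Z_T-Z_0)}\le e^{T\lambda^2/8}.
\]

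Finally, Markov's inequality applied to $e^{-\lambda(f-\mathbb Ef)}$ yields
\[
\mathbb P(f-\mathbb E f\le-\varepsilon)\le \exp\bigl(-\lambda\varepsilon+T\lambda^2/8\bigr).
\]
Choosing $\lambda=4\varepsilon/T$ makes the exponent equal to $-2\varepsilon^2/T$, which is the stated bound.
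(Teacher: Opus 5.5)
Your argument is correct. It is the standard Doob-martingale proof with a conditional Hoeffding lemma: independence is used exactly where it is needed (so that the product measure integrated against in $g_i$ does not depend on $x_i$), and the choice $\lambda=4\varepsilon/T$ yields precisely $\exp(-2\varepsilon^2/T)$.

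The paper gives no proof of this lemma. It quotes it as a standard tool and applies it only to i.i.d.\ uniform variables on the finite set $[n]$, so your measurability caveat is harmless there.
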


We shall also use the standard expectation and variance formulas for the
hypergeometric distribution. If $H$ counts the number of marked elements
obtained by choosing $m$ elements uniformly without replacement from a
set of size $n$ containing $K$ marked elements, then
$\mathbb EH=\frac{mK}{n}$ and
\[
    \operatorname{Var}(H)
    =
    m\frac{K}{n}
    \left(1-\frac{K}{n}\right)
    \frac{n-m}{n-1}
    \leq \mathbb EH.
\]

We next record the threshold for the appearance of a single arithmetic progression in a binomial random subset. For completeness, we include a proof.

\begin{proposition}
\label{prop:threshold}
Let $k\geq3$ be fixed, and let $p=p(n)\in[0,1]$. If
$p=\omega\left(n^{-2/k}\right)$, then \whp the random set $[n]_p$
contains a $k$-AP. If
$p=o\left(n^{-2/k}\right)$, then \whp the random set $[n]_p$ contains no $k$-AP.
\end{proposition}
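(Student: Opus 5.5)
We use the standard first and second moment method. Let $X$ be the number of $k$-APs contained in $[n]_p$. The number of $k$-APs in $[n]$ is
\[
N_k=\sum_{d\ge1}\max\{0,\,n-(k-1)d\}=\Theta(n^2),
\]
since for each common difference $d\le (n-1)/(k-1)$ there are $n-(k-1)d$ starting points. Each fixed $k$-AP lies in $[n]_p$ with probability $p^k$, so $\mathbb EX=\Theta(n^2p^k)$.

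\emph{The $0$-statement.} If $p=o(n^{-2/k})$, then $\mathbb EX=o(1)$. Markov's inequality gives $\mathbb P(X\ge1)\le\mathbb EX\to0$.

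\emph{The $1$-statement.} Assume $p=\omega(n^{-2/k})$. By monotonicity we may assume $p\le n^{-2/k}\log n$, say, so that $p=o(1)$; otherwise we couple with a smaller $p$, since containing a $k$-AP is an increasing property. Then $\mathbb EX=\Theta(n^2p^k)\to\infty$, and we apply Chebyshev's inequality (Lemma~\ref{lem:chebyshev}) with $a=\mathbb EX$. It then suffices to show $\operatorname{Var}(X)=o((\mathbb EX)^2)$.

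Write $\operatorname{Var}(X)\le \mathbb EX+\sum_{(P,Q)} \mathbb P(P\cup Q\subseteq[n]_p)$, where the sum runs over ordered pairs of distinct $k$-APs sharing $j\in\{1,\dots,k-1\}$ elements. Disjoint pairs contribute nothing, since they are independent. The main step is counting these overlapping pairs.

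\begin{itemize}
\item For $j\ge2$: two shared elements determine $Q$ up to $O(1)$ choices. Each of the two positions in $Q$ and the choice of elements of $P$ contribute only constants, because $k$ is fixed. So there are $O(n^2)$ such pairs, each contributing $p^{2k-j}$. The total is $O(n^2p^{k+1})=o(n^2p^k)=o(\mathbb EX)$ when $p=o(1)$.
\item For $j=1$: choose $P$ in $O(n^2)$ ways, then the shared element and its position in $Q$ in $O(1)$ ways, then the difference of $Q$ in $O(n)$ ways. This gives $O(n^3)$ pairs, each contributing $p^{2k-1}$. The ratio to $(\mathbb EX)^2=\Theta(n^4p^{2k})$ is $O(1/(np))$. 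Since $k\ge3$ gives $p=\omega(n^{-2/k})\ge\omega(n^{-1})$, this ratio tends to $0$.
\end{itemize}

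Hence $\operatorname{Var}(X)=O(\mathbb EX)+o((\mathbb EX)^2)=o((\mathbb EX)^2)$, so $\mathbb P(X=0)\to0$.

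The only mildly delicate point is the bookkeeping of overlap counts for $j\ge2$, namely that the intersection pattern of two distinct $k$-APs sharing at least two elements leaves $O(1)$ freedom once $P$ is fixed. This holds because two elements of $Q$, together with their indices in $Q$, determine $Q$ completely.
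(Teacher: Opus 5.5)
Your proposal is correct and follows the paper's proof essentially step for step: Markov for the $0$-statement, and Chebyshev for the $1$-statement, with the overlapping pairs split into $O(n^3)$ pairs sharing exactly one element and $O(n^2)$ pairs sharing at least two. The only difference is your preliminary reduction to $p=o(1)$, which is harmless but unnecessary: $n^2p^{k+1}\le n^2p^k=O(\mathbb EX)$ holds for every $p\le 1$, and the paper simply bounds this term's contribution to $\operatorname{Var}(X)/\mu^2$ by $O(1/(n^2p^{k-1}))=o(1)$.
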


\begin{proof}
Let $\mathcal A_k(n)$ be the family of all non-trivial $k$-term arithmetic progressions in $[n]$, and let $X$ count those contained in $[n]_p$.
Since $|\mathcal A_k(n)|=\Theta_k(n^2)$, we have
$\mu:=\mathbb EX=\Theta_k(n^2p^k)$. If $p=o(n^{-2/k})$, then
$\mu=o(1)$, and Markov's inequality gives $\mathbb P(X>0)=o(1)$.

Now suppose that $p=\omega(n^{-2/k})$. There are $O_k(n^3)$ ordered
pairs of distinct $k$-APs intersecting in exactly one element, and
$O_k(n^2)$ ordered pairs intersecting in at least two elements.
Moreover, the union of two distinct $k$-APs has size at least $k+1$.
Consequently,
\[
    \operatorname{Var}(X)
    \leq
    \mu
    +O_k(n^3p^{2k-1})
    +O_k(n^2p^{k+1}).
\]
Since $\mu=\Theta_k(n^2p^k)$,
\[
    \frac{\operatorname{Var}(X)}{\mu^2}
    =
    O_k\left(
        \frac{1}{n^2p^k}
        +\frac{1}{np}
        +\frac{1}{n^2p^{k-1}}
    \right)
    =o(1).
\]
Thus, by Chebyshev's inequality,
\[
    \mathbb P(X=0)
    \leq
    \frac{\operatorname{Var}(X)}{\mu^2}
    =o(1).
\]
\end{proof}

We shall also need a standard comparison between the binomial and
uniform random-set models.

\begin{lemma}
\label{lem:coupling}
Let $\mathcal P$ be an increasing property of subsets of $[n]$, and let
$p=p(n)$ satisfy $pn\longrightarrow\infty$. Let $m=m(n)$ be an integer
with $0\leq m\leq n$. Then the following statements hold.
\begin{enumerate}
\item[(a)] If $m\geq2pn$, then
$\mathbb P\bigl([n]_m\in\mathcal P\bigr)
\geq\mathbb P\bigl([n]_p\in\mathcal P\bigr)-o(1)$.

\item[(b)] If $m\leq pn/2$, then
$\mathbb P\bigl([n]_m\in\mathcal P\bigr)
\leq\mathbb P\bigl([n]_p\in\mathcal P\bigr)+o(1)$.
\end{enumerate}
\end{lemma}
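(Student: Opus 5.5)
We prove both parts with the standard coupling of the two models: the binomial set is obtained by first sampling its size and then sampling a uniform set of that size. Let $N\sim\mathrm{Bin}(n,p)$, and conditionally on $N=m'$ let $[n]_p$ be a uniformly random $m'$-element subset. This is exactly the law of $[n]_p$. Moreover, uniform sets of different sizes can be coupled monotonically. Take a uniformly random permutation $Y_1,\ldots,Y_n$ of $[n]$ and set $[n]_{m'}=\{Y_1,\ldots,Y_{m'}\}$. Then $[n]_{m'}\subseteq[n]_{m''}$ whenever $m'\leq m''$. Since $\mathcal P$ is increasing, $f(m'):=\mathbb P([n]_{m'}\in\mathcal P)$ is nondecreasing in $m'$.

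The only probabilistic input is concentration of $N$. We have $\mathbb E N=pn\to\infty$, so Chernoff's inequality (Lemma~\ref{lem:chernoff}) with $\varepsilon=1/2$ gives
\[
\mathbb P\bigl(|N-pn|\geq pn/2\bigr)\leq 2\exp(-pn/12)=o(1).
\]
Hence, \whp, $pn/2< N< 3pn/2$.

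For (a), assume $m\geq 2pn$. By the law of total probability,
\[
\mathbb P([n]_p\in\mathcal P)=\sum_{m'}\mathbb P(N=m')f(m')\leq \mathbb P(N>m)+\sum_{m'\leq m}\mathbb P(N=m')f(m).
\]
The last step uses monotonicity of $f$, namely $f(m')\leq f(m)$ for $m'\leq m$. The sum is at most $f(m)$, and $\mathbb P(N>m)\leq\mathbb P(N\geq 3pn/2)=o(1)$ because $m\geq 2pn>3pn/2$. Therefore $\mathbb P([n]_p\in\mathcal P)\leq f(m)+o(1)$, which is (a).

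For (b), assume $m\leq pn/2$. Symmetrically,
\[
\mathbb P([n]_p\in\mathcal P)\geq \sum_{m'\geq m}\mathbb P(N=m')f(m')\geq f(m)\,\mathbb P(N\geq m).
\]
Here $\mathbb P(N\geq m)\geq \mathbb P(N>pn/2)=1-o(1)$. Since $f(m)\leq 1$, we get
\[
f(m)\leq \mathbb P([n]_p\in\mathcal P)+f(m)\bigl(1-\mathbb P(N\geq m)\bigr)\leq \mathbb P([n]_p\in\mathcal P)+o(1),
\]
which is (b).

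Nothing in this argument is really an obstacle. The only points to check are that the conditional law of $[n]_p$ given $|[n]_p|=m'$ is indeed uniform on $m'$-subsets, and that the $o(1)$ terms are uniform in $m$. The latter holds because they depend only on $pn$, through the Chernoff bound.
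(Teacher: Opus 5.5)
Your proof is correct and is essentially the paper's argument. The paper realises $[n]_p$ and $[n]_m$ as the initial segments $\{\pi(1),\ldots,\pi(Y)\}$ and $\{\pi(1),\ldots,\pi(m)\}$ of a single random permutation, with $Y\sim\operatorname{Bin}(n,p)$ independent, and charges the failure of inclusion to $\mathbb P(Y>m)$ or $\mathbb P(Y<m)$ via Chernoff; you use the same permutation coupling to get monotonicity of $f(m')=\mathbb P([n]_{m'}\in\mathcal P)$ and then average over $N$, which is the same computation written via the law of total probability.
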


\begin{proof}
Let $\pi$ be a uniformly random permutation of $[n]$, and let
$Y\sim\operatorname{Bin}(n,p)$ be independent of $\pi$. Define
$A=\{\pi(1),\ldots,\pi(Y)\}$ and
$B=\{\pi(1),\ldots,\pi(m)\}$. Then $A\sim[n]_p$ and $B\sim[n]_m$.

Suppose first that $m\geq2pn$. On the event $Y\leq m$, we have
$A\subseteq B$. Since $\mathcal P$ is increasing,
\[
    \mathbb P(A\in\mathcal P)
    \leq
    \mathbb P(B\in\mathcal P)+\mathbb P(Y>m).
\]
By Chernoff's inequality,
\[
    \mathbb P(Y>m)
    \leq
    \mathbb P(Y>2pn)
    =
    o(1),
\]
because $pn\to\infty$. This proves Part~\textup{(a)}.

Suppose now that $m\leq pn/2$. On the event $Y\geq m$, we have
$B\subseteq A$. Hence,
\[
    \mathbb P(B\in\mathcal P)
    \leq
    \mathbb P(A\in\mathcal P)+\mathbb P(Y<m).
\]
Again, Chernoff's inequality gives
\[
    \mathbb P(Y<m)
    \leq
    \mathbb P(Y<pn/2)
    =
    o(1),
\]
which proves Part~\textup{(b)}.
\end{proof}

\section{Proof of Theorem \ref{thm:main}}

\subsection*{Proof of the 1-statement.} Let $k\geq3$, $t=\omega\left(n^{1-2/k}\right)$ and $b=\omega\left(\left(\frac{n}{t}\right)^{(k-2)/2}\right)$.
Suppose that $m=\left\lfloor\frac{bn}{2t}\right\rfloor$. The strategy for the 1-statement purchases every exposed element
belonging to $[m]$, provided that fewer than $b$ elements have been
purchased so far. We shall show that \whp the set
$[n]_{t}\cap[m]$ has size at most $b$ and contains a $k$-AP. On this
event, the strategy purchases every element of $[n]_{t}\cap[m]$ and
therefore succeeds.

We first show that with high probability such a set contains a $k$-AP. Let $p=\frac{t}{2n}$ and let $A$ be the random set $[n]_{p}\cap[m]$. Clearly $A\sim[m]_{p}$. Hence, $\mathbb{E}|A|=pm=b/4$. By Chernoff's inequality (Lemma \ref{lem:chernoff}), we have
\[ \mathbb{P}(|A|\ge b)\le\mathbb{P}(|A|-\mathbb{E}|A|\ge\mathbb{E}|A|)=2e^{-b/12}=o(1). \]
We can also see that $p=\omega(m^{-2/k})$. Indeed, given that $b^{2}p^{k-2}=b^{2}(t/2n)^{k-2}=\omega(1)$,
\[ 
\frac{p}{m^{-2/k}} = p m^{2/k} = p \left(\frac{b}{4p}\right)^{2/k} = \left(\frac{b^{2}p^{k-2}}{16}\right)^{1/k} = \omega(1).
\]
Then by Proposition \ref{prop:threshold}, \whp $A$ contains a $k$-AP.

Hence, \whp the random set $[n]_{p}$ satisfies that the size of
$A=[n]_{p}\cap[m]$ is less than $b$, and furthermore, $A$ contains a
$k$-AP. The property of containing a $k$-AP is clearly increasing, and
therefore, by Lemma~\ref{lem:coupling}(a), the random set $[n]_{t}$
also satisfies it \whp.

We now show that we do not exceed the budget when constructing this set. In order to show this, let $B$ be the random set $[n]_{t}\cap[m]$, and observe that the random variable $|B|$ follows a hypergeometric distribution with parameters $t$, $n$ and $m$, that is, it counts the number of special objects when we select $t$ elements without replacement from a population of size $n$ that contains $m$ special objects. We then have
\[ \mathbb{E}|B|=\frac{tm}{n} \quad \text{and} \quad \text{Var}|B|=\frac{tm}{n}\left(\frac{n-m}{n}\right)\left(\frac{n-t}{n-1}\right)\le\mathbb{E}|B| \]
Notice also that $\mathbb{E}|B|=tm/n=b/2=\omega(1)$. Therefore, by Chebyshev's inequality (Lemma~\ref{lem:chebyshev}),
\[ \mathbb{P}(|B|\ge b)=\mathbb{P}(|B|-\mathbb{E}|B|\ge\mathbb{E}|B|)\le\frac{1}{\mathbb{E}|B|}=o(1). \]
By the preceding two estimates, \whp the set
$B=[n]_{t}\cap[m]$ has size at most $b$ and contains a $k$-AP.
Consequently, the strategy purchases every element of $B$ and succeeds.
This completes the proof of the 1-statement.

\subsection*{Proof of the 0-statement.}
First suppose that $t=o(n^{1-2/k})$. Since there are $O_k(n^2)$
non-trivial $k$-APs in $[n]$, the expected
number of such progressions contained in $[n]_t$ is at most
\[
    O_k(n^2)\frac{(t)_k}{(n)_k}
    \leq O_k\left(n^2\left(\frac{t}{n}\right)^k\right)
    =
    O_k\left(\frac{t^k}{n^{k-2}}\right)
    =
    o(1).
\]
Thus, by Markov's inequality, \whp the set $[n]_t$ contains no $k$-AP.

It remains to consider the case
$b=o((n/t)^{\frac{k-2}{2}})$. By the subsequence principle, it is enough
to consider a subsequence on which either
$t=o(n^{1-2/k})$ or $t=\Omega(n^{1-2/k})$. The former case has already
been handled, so we may assume that $t=\Omega(n^{1-2/k})$, and in
particular $t\to\infty$. If $b=0$, the conclusion is immediate.
Otherwise, after passing to a further subsequence, we may assume that
$b\geq1$. The assumption $b=o((n/t)^{\frac{k-2}{2}})$ then implies that
$t=o(n)$.

Let $T=2t$. Suppose that $X_{1},\dots,X_{T}$ are i.i.d.\ random
variables, where each $X_i$ is taken uniformly at random from $[n]$.
Let $Y$ be the number of distinct values observed among these $T$
variables. We have
\[
    \mathbb{E}Y
    =
    n\left(1-\left(1-\frac1n\right)^{2t}\right)
    \geq
    2t-\frac{\binom{2t}{2}}{n}
    \geq
    2t\left(1-\frac{t}{n}\right).
\]
Since $t=o(n)$, we have $\mathbb EY\geq 3t/2$ for all sufficiently large $n$. Moreover, changing one of the variables $X_i$ changes $Y$ by at most $1$. Hence, by McDiarmid's inequality (Lemma~\ref{lem:mcdiarmid}),
\[
    \mathbb{P}(Y<t)
    \leq
    \exp\left(-\frac{2(\mathbb EY-t)^2}{T}\right)
    \leq
    \exp(-t/4)
    =
    o(1).
\]

We now justify the reduction to the iid model. Let $\mathcal S$ be an
arbitrary $(t,b)$-strategy in the original without-replacement model.
Define a $(T,b)$-strategy $\mathcal S'$ for the iid model by ignoring
repeated values and applying $\mathcal S$ to the first $t$ distinct
values that appear. Conditional on $Y\geq t$, these first $t$ distinct
values, in their order of appearance, have the same distribution as a
uniformly random ordered $t$-element subset of $[n]$. Therefore,
\[
    \mathbb P(\mathcal S\text{ succeeds})
    \leq
    \mathbb P(\mathcal S'\text{ succeeds})+\mathbb P(Y<t).
\]
Consequently, it is enough to show that every $(T,b)$-strategy presented
with $(X_i)_{i\in[T]}$ succeeds with probability $o(1)$.

A \emph{template} is a $k$-AP contained in $[n]$. With a slight abuse of terminology, we say that a set $B\subset[n]$ \emph{contains an access-$i$ template $\Lambda$} if at least $i$ elements of $\Lambda$ belong to $B$. We also say that a template $\Lambda$ \emph{contains} an element $x\in[n]$ if $x\in \Lambda$. Notice that $B$ contains a $k$-AP if and only if it contains an access-$k$ template.

Fix a $(T,b)$-strategy, and let $B\subset[n]$ be the set of distinct
values purchased by it when given $(X_i)_{i\in[T]}$. We may assume
without loss of generality that the strategy never purchases the same
value twice. For $2\leq l\leq k$, let $T_l$ denote the set of access-$l$
templates in $B$.

\begin{claim} \label{claim:templates}
If $3\le l\le k$ is such that $\mathbb{E}|T_{l-1}|\le k^{l-1}b^{2}(\frac{T}{n})^{l-3}$ then $\mathbb{E}|T_{l}|\le k^{l}b^{2}(\frac{T}{n})^{l-2}$.
\end{claim}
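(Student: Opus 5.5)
The plan is to charge every access-$l$ template to the moment it becomes access-$l$. Each template $\Lambda\in T_l$ has at least $l$ of its elements in $B$. Order the purchases in time and let $x$ be the purchase after which $\Lambda$ first has $l$ purchased elements. Just before $x$ is bought, $\Lambda$ already had at least $l-1$ purchased elements, so it was an access-$(l-1)$ template of the set bought so far. Since $B$ only grows, that earlier set is contained in the final $B$, and $\Lambda\in T_{l-1}$. In short, every $\Lambda\in T_l$ is some $\Lambda\in T_{l-1}$ together with an event: at some step $i\in[T]$, the value $X_i$ is an element of $\Lambda$ not yet purchased, and the strategy buys it.

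This gives the union bound
\[
|T_l|\le \sum_{i=1}^{T}\ \sum_{\Lambda} \mathbf 1\bigl[\Lambda \text{ is access-}(l-1)\text{ in } B_{i-1}\bigr]\,\mathbf 1\bigl[X_i\in\Lambda\setminus B_{i-1}\bigr],
\]
where $B_{i-1}$ is the set purchased among $X_1,\dots,X_{i-1}$. We drop the requirement that $X_i$ is actually purchased, which only makes the right-hand side larger.

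The key step uses the iid model. The event that $\Lambda$ is access-$(l-1)$ in $B_{i-1}$ is determined by $X_1,\dots,X_{i-1}$ and the strategy's randomness, while $X_i$ is independent of these and uniform on $[n]$. Since $|\Lambda|=k$, conditioning gives $\mathbb P(X_i\in\Lambda\mid\mathcal F_{i-1})\le k/n$. Hence
\[
\mathbb E|T_l|\le \sum_{i=1}^T \frac{k}{n}\,\mathbb E\bigl|T_{l-1}(B_{i-1})\bigr|\le T\cdot\frac kn\,\mathbb E|T_{l-1}|,
\]
where we used $B_{i-1}\subseteq B$ to get $|T_{l-1}(B_{i-1})|\le|T_{l-1}|$ pointwise. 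Plugging in the hypothesis $\mathbb E|T_{l-1}|\le k^{l-1}b^2(T/n)^{l-3}$ yields exactly $k^l b^2 (T/n)^{l-2}$.

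The main obstacle is the conditioning step. The strategy's future decisions depend on $X_i$, so we cannot condition on the final $B$. We sidestep this by summing over the moment $i$ and using only the $\mathcal F_{i-1}$-measurable set $B_{i-1}$, which is legitimate because templates only gain access over time.

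One also has to check that different indices $i$ do not undercount: each $\Lambda\in T_l$ is counted at least once, namely at its upgrade time, and overcounting is harmless. Finally, $l\ge3$ ensures the transition from access-$(l-1)$ to access-$l$ requires a new element rather than the pair-counting base case.
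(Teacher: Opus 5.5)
Your proof is correct and is essentially the paper's argument: the paper likewise charges each new access-$l$ template to the step $s$ at which it gains its $l$-th element. It bounds $\mathbb{E}(\Delta_{l,s}\mid\mathcal F_{s-1})\le k|T_{l-1}(s-1)|/n$ because $X_s$ is uniform given the history, and then sums over $s\le T$ using $T_{l-1}(s-1)\subseteq T_{l-1}$. Your indicator sum over pairs (step, template) is just an unpacked form of the paper's increment $\Delta_{l,s}$.
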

\begin{claimproof}
For $0\leq s\leq T$, let $B_s$ be the set of distinct values purchased
during the first $s$ steps, and let $T_l(s)$ be the set of access-$l$
templates in $B_s$. Thus, $B_T=B$ and $T_l(T)=T_l$. Put
\[
    \Delta_{l,s}
    =
    |T_l(s)\setminus T_l(s-1)|.
\]
Let $\mathcal F_{s-1}$ denote the history up to step $s-1$, including
the internal randomness of the strategy.

Conditional on $\mathcal F_{s-1}$, the random variable $X_s$ is uniform
on $[n]$. If a template becomes an access-$l$ template at step $s$, then
before step $s$ it was already an access-$(l-1)$ template. For each
template in $T_{l-1}(s-1)$, there are at most $k$ values of $X_s$ that
can cause this template to enter $T_l(s)$. Therefore,
\[
    \mathbb E\bigl(\Delta_{l,s}\mid\mathcal F_{s-1}\bigr)
    \leq
    \frac{k|T_{l-1}(s-1)|}{n}.
\]
Summing over all steps and using
$T_{l-1}(s-1)\subseteq T_{l-1}$, we obtain
\[
\begin{aligned}
    \mathbb E|T_l|
    &=
    \sum_{s=1}^{T}\mathbb E\Delta_{l,s} \\
    &\leq
    \frac{k}{n}\sum_{s=1}^{T}
    \mathbb E|T_{l-1}(s-1)| \\
    &\leq
    \frac{kT}{n}\mathbb E|T_{l-1}|.
\end{aligned}
\]
Consequently,
\[
    \mathbb E|T_l|
    \leq
    k\mathbb E|T_{l-1}|\left(\frac{T}{n}\right)
    \leq
    k^lb^{2}\left(\frac{T}{n}\right)^{l-2}.
\]
\end{claimproof}

Notice now that deterministically, $|T_{2}|\le\binom{b}{2}\binom{k}{2} < k^{2}b^{2}$. Therefore, by iterating Claim \ref{claim:templates} exactly $k-2$ times, we obtain
\[ \mathbb{E}|T_{k}|\le k^{k}b^{2}\left(\frac{T}{n}\right)^{k-2}. \]
Since $b = o((n/t)^{\frac{k-2}{2}})$ and $T = 2t$, substituting
$b^2 = o((n/T)^{k-2})$ yields $\mathbb{E}|T_k| = o(1)$. Thus, by
Markov's inequality, \whp $|T_{k}|=0$, which implies the
0-statement. This completes the proof of Theorem \ref{thm:main}.

\bibliographystyle{custom-plainurl} % Custom numbers
\bibliography{mybib}

\end{document}